\newtheorem{theorem}{Theorem}
\newtheorem{lemma}{Lemma}
\begin{document}
\baselineskip=17pt

\title{\bf On the distribution of {\boldmath$\alpha p$} modulo one over Piatetski-Shapiro primes}

\author{\bf S. I. Dimitrov}

\date{\bf}
\maketitle

\begin{abstract}
Let $[\, \cdot\,]$ be the floor function and $\|x\|$ denotes the distance from $x$ to the nearest integer.
In this paper we show that  whenever $\alpha$ is irrational and $\beta$ is real
then for any fixed $1<c<12/11$ there exist infinitely many prime numbers $p$ satisfying the inequality
\begin{equation*}
\|\alpha p+\beta\|\ll p^{\frac{11c-12}{26c}}\log^6p
\end{equation*}
and such that $p=[n^c]$.\\
\quad\\
\textbf{Keywords}:  Distribution modulo one, Piatetski-Shapiro primes.\\
\quad\\
{\bf  2020 Math.\ Subject Classification}:  11J71 $\cdot$ 11J25 $\cdot$ 11P32  $\cdot$ 11L07
\end{abstract}

\section{Introduction and statement of the result}
\indent

In 1947 Vinogradov \cite{Vinogradov} proved that if $\theta=1/5-\varepsilon$ then there are infinitely many primes $p$ such that
\begin{equation}\label{Vinogradov-inequality}
\|\alpha p+\beta\|<p^{-\theta }\,.
\end{equation}
Subsequently the upper bound for $\theta $ was improved  by several authors and strongest result up to now is due Matom\"{a}ki \cite{Mato} with $\theta=1/3-\varepsilon$ and $\beta=0$.

On the other hand in 1953 Piatetski-Shapiro \cite{Shapiro}
showed that for any fixed $\gamma\in(11/12,1)$ the sequence
\begin{equation*}
\big([n^{1/\gamma}]\big)_{n\in \mathbb{N}}
\end{equation*}
contains infinitely many prime numbers.
The prime numbers of the form $p = [n^{1/\gamma}]$ are called Piatetski-Shapiro primes of type $\gamma$.
Afterwards the interval for $\gamma$ was sharpened many times and the best result
up to now belongs to Rivat and Wu \cite{Rivat-Wu} for $\gamma\in(205/243,1)$.
More precisely they proved that for any fixed $205/243<\gamma<1$ the lower bound
\begin{equation}\label{Louerbound}
\sum\limits_{p\leq X\atop{p=[n^{1/\gamma}]}}1\gg\frac{X^\gamma}{\log X}
\end{equation}
holds.
In order to establish our result we solve Vinogradov's inequality \eqref{Vinogradov-inequality}
with Piatetski-Shapiro primes. Thus we prove the following theorem.
\begin{theorem}\label{Maintheorem}
Let $\gamma$ be fixed with $11/12<\gamma<1$, $\alpha$ is irrational and $\beta$ is real.
Then there exist infinitely many Piatetski-Shapiro prime numbers $p$ of type $\gamma$
such that
\begin{equation*}
\|\alpha p+\beta\|\ll p^{\frac{11-12\gamma}{26}}\log^6p\,.
\end{equation*}
\end{theorem}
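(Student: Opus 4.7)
The plan is to show positivity of the counting function
\begin{equation*}
S(X) := \sum_{\substack{p \leq X \\ p = [n^{1/\gamma}]}} \mathbf{1}\bigl(\|\alpha p + \beta\| < \Delta\bigr), \qquad \Delta := c_0\, X^{(11-12\gamma)/26}\log^6 X,
\end{equation*}
for a sufficiently small absolute constant $c_0>0$; Theorem~\ref{Maintheorem} then follows by letting $X\to\infty$ along an appropriate sequence.

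The first step is to reduce $S(X)$ to exponential sums via two applications of Vaaler's construction. The indicator $\mathbf{1}(\|\alpha p+\beta\|<\Delta)$ is bracketed between trigonometric polynomials of degree $H$ with Fourier coefficients $\ll\min(\Delta,|h|^{-1})$, while the Piatetski--Shapiro condition $p=[n^{1/\gamma}]$ is encoded as $\lfloor-p^\gamma\rfloor-\lfloor-(p+1)^\gamma\rfloor$ and, after isolating the density factor $\gamma p^{\gamma-1}$, its fluctuating part is handled by Vaaler's approximation of the sawtooth $\psi(x)=x-[x]-\tfrac12$ of degree $K$. The diagonal contribution of the two constant pieces produces a main term of size $\gg \Delta X^\gamma/\log X$ by the Rivat--Wu bound \eqref{Louerbound}. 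The remaining off-diagonal terms involve the pure Vinogradov sum $\sum_{p\le X} e(h\alpha p)$ and the hybrid exponential sums
\begin{equation*}
T(h,k;X) := \sum_{p \leq X} e\bigl(h\alpha p + k\, p^{\gamma}\bigr),\qquad 1\le|h|\le H,\ 1\le|k|\le K,
\end{equation*}
whose total contribution must be beaten below $\Delta X^\gamma/\log^2 X$.

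To handle $T(h,k;X)$ I would invoke Vaughan's identity, reducing the prime support to Type~I sums $\sum_{m\le M} a_m\sum_{mn\le X} e(h\alpha mn+k(mn)^\gamma)$ and Type~II sums $\sum_{m}\sum_{n} a_m b_n\, e(h\alpha mn+k(mn)^\gamma)$, with $X^{\varepsilon}$-bounded coefficients and $M$ in prescribed ranges. Type~I sums are estimated by iterated van der Corput $AB$-processes on the inner variable, exploiting the curved phase $k(mn)^\gamma$ whose higher derivatives have predictable size. Type~II sums are squared via Cauchy--Schwarz and shifted by the Weyl--van der Corput inequality, producing difference phases of the form $h\alpha m(n_1-n_2)+k m^\gamma(n_1^\gamma-n_2^\gamma)$; the linear piece is controlled by introducing a Dirichlet approximation $|\alpha-a/q|\le q^{-2}$ with $q$ in a suitable range, while the curved piece again admits a derivative estimate.

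The final step is to optimise $H$, $K$, $M$ and the size of $q$ so that every error contribution is $o(\Delta X^\gamma/\log X)$; this system of constraints pins down the admissible $\Delta$ with exponent $(11-12\gamma)/26$ and logarithmic loss $\log^6 X$. The main obstacle, and the step I expect to require the most care, is the Type~II estimate for $T(h,k;X)$ with both $h$ and $k$ nonzero: the two phases $h\alpha p$ and $kp^\gamma$ are essentially independent, so the saving must come from whichever is dominant in a given range, and one must further ensure that the marginal strata $k=0$ (the classical Vinogradov sum behind \eqref{Vinogradov-inequality}) and $h=0$ (the pure Piatetski--Shapiro exponential sum) are absorbed without collapsing the budget for $\Delta$.
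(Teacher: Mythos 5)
Your plan is essentially the paper's proof: bracket the indicator of $\|\alpha p+\beta\|<\Delta$ by a finite Fourier (Vaaler-type) polynomial, write the Piatetski--Shapiro condition as $[-p^\gamma]-[-(p+1)^\gamma]$ and expand the sawtooth $\psi$, extract the main term $\gg\Delta N^{\gamma}/\log N$ from the Rivat--Wu bound, and beat down the hybrid bilinear sums via Vaughan's identity, Cauchy--Schwarz, the Weyl--van der Corput shift, and a second-derivative test. The ``main obstacle'' you flag is actually automatic: after the Weyl--van der Corput shift the phase $g_{l,q}(d)=f(d,l)-f(d,l+q)$ has linear part $-\alpha h d q$, which is affine in $d$ and hence invisible to $g''(d)$, so the second-derivative estimate depends only on the curved piece $m d^{\gamma}$; no Dirichlet approximation of $\alpha$ is needed in the Type~I or Type~II bounds --- it enters only in controlling the $\psi$-free term $\Gamma_1$ through the pure prime sum $\sum_{p\le N}e(\alpha h p)\log p$.
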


\section{Notations}
\indent

Let $C$ is a sufficiently large positive constant.
The letter $p$ will always denote prime number.
The notation $x\sim X$ means that $x$ runs through a subinterval of
$(X, 2X]$, which endpoints are not necessary the same in
the different formulas and may depend on the outer summation variables.
By $[x]$, $\{x\}$ and $\|x\|$ we denote the integer part of $x$,
the fractional part of $x$ and the distance from $x$ to the nearest integer.
Moreover $e(t)$=exp($2\pi it$) and $\psi(t)=\{t\}-1/2$.
As usual $\Lambda(n)$ is von Mangoldt's function and $\tau(n)$ denotes the number of positive divisors of $n$.
Let $\gamma$ be a real constant such that $11/12<\gamma<1$.
Since $\alpha$ is irrational, there are infinitely many different convergents
$a/q$ to its continued fraction, with
\begin{equation}\label{alphaaq}
\bigg|\alpha- \frac{a}{q}\bigg|<\frac{1}{q^2}\,,\quad (a, q) = 1\,,\quad a\neq0
\end{equation}
and $q$ is arbitrary large. Denote
\begin{align}
\label{Nq}
&N=q^\frac{13}{12-6\gamma}\,;\\
\label{Delta}
&\Delta=CN^{\frac{11-12\gamma}{26}}\log^6N\,;\\
\label{HNq}
&H=\left[q^{1/2}\right]\,;\\
\label{Mgamma}
&M=N^{\frac{15-14\gamma}{26}}\,;\\
\label{vgamma}
&v=N^{\frac{29-8\gamma}{52}}\,.
\end{align}

\section{Preliminary lemmas}
\indent

\begin{lemma}\label{Lemma1}
Suppose that $X, Y\geq1$,\, $\big|\alpha-\frac{a}{q}\big|<\frac{1}{q^2}$\,, $(a, q)=1$.
Then
\begin{equation*}
\sum_{n\le X} \, \min \left( Y,\, \frac{1}{ \|\alpha n+\beta\| } \right)
\ll\frac{XY}{q}+Y+(X+q)\log 2q\,.
\end{equation*}
\end{lemma}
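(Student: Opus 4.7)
The plan is to prove this by a standard block decomposition argument, exploiting Dirichlet's approximation to reduce the behavior of $\alpha n + \beta$ on short intervals to that of the arithmetic progression $an/q$ modulo one.

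First I would split the range $n \le X$ into at most $\lceil X/q \rceil + 1$ consecutive blocks, each of length at most $q$. On a typical block, write $n = n_0 + r$ with $0 \le r < q$. Using the hypothesis $\alpha = a/q + \vartheta/q^2$ with $|\vartheta| < 1$, one has
\begin{equation*}
\alpha n + \beta = \frac{ar}{q} + \gamma_{n_0} + O(1/q),
\end{equation*}
where $\gamma_{n_0} = \alpha n_0 + \beta$ is constant on the block. Since $(a,q)=1$, the map $r \mapsto ar \pmod q$ is a bijection on $\{0,1,\ldots,q-1\}$, so as $r$ runs through a block, the quantity $\{\alpha n + \beta\}$ takes values that are, up to an $O(1/q)$ perturbation, a shift of $\{0, 1/q, 2/q, \ldots, (q-1)/q\}$.

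Next I would bound the contribution of a single block. After the bijection, at most $O(1)$ values of $r$ can make $\|\alpha n + \beta\| \le 2/q$, and for the remaining residue classes one has $\|\alpha n + \beta\| \gg j/q$ for the $j$-th closest class. Therefore the sum over one block is
\begin{equation*}
\ll Y + \sum_{1 \le j \le q/2} \frac{q}{j} \ll Y + q \log 2q.
\end{equation*}
Summing this estimate over the $\le X/q + 1$ blocks yields
\begin{equation*}
\left( \frac{X}{q} + 1 \right)\bigl( Y + q \log 2q \bigr) \ll \frac{XY}{q} + Y + (X + q)\log 2q,
\end{equation*}
which is the claimed bound.

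The main technical point — really the only one requiring care — is ensuring that the $O(1/q)$ perturbation coming from replacing $\alpha$ by $a/q$ does not corrupt the counting: one must verify that the perturbed shifts of $\{r/q\}$ still have at most $O(1)$ representatives in any interval of length $1/q$ around an integer. This follows because the total perturbation over a block of length $q$ is $O(1)$, uniformly spread, so nearby residues $r/q$ cannot collide in the $1/q$-neighborhood of an integer more than $O(1)$ times. Everything else is routine summation.
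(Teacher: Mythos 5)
Your block-decomposition argument is correct and is essentially the standard proof of this classical estimate; the paper itself only cites Vaughan's Lemma 1, whose proof proceeds by exactly this device of splitting $n\le X$ into $O(X/q+1)$ blocks of length $q$, using $(a,q)=1$ to permute residues, and controlling the $O(1/q)$ perturbation as you describe. No gap.
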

\begin{proof}
See (\cite{Vaughan1}, Lemma 1).
\end{proof}

\begin{lemma}\label{Expsumest}
Suppose that $\alpha \in \mathbb{R}$,\, $a \in \mathbb{Z}$,\, $q\in \mathbb{N}$,\,
$\big|\alpha-\frac{a}{q}\big|\leq\frac{1}{q^2}$\,, $(a, q)=1$.
If
\begin{equation}\label{sn1}
\mathcal{S}(X)=\sum\limits_{p\le X}e(\alpha p)\log p
\end{equation}
then
\begin{equation*}
\mathcal{S}(X)\ll \Big(Xq^{-1/2}+X^{4/5}+X^{1/2}q^{1/2}\Big)\log^4X\,.
\end{equation*}
\end{lemma}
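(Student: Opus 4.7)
The plan is to prove a standard Vinogradov-type estimate for the exponential sum over primes, so I would follow the classical route via Vaughan's identity. By partial summation it suffices to bound
\[
T(X)=\sum_{n\le X}\Lambda(n)e(\alpha n),
\]
since splitting \eqref{sn1} and absorbing the contribution of prime powers $p^k$ with $k\ge 2$ (which is $O(X^{1/2}\log X)$) into the stated error reduces $\mathcal{S}(X)$ to $T(X)$ up to an acceptable loss. So the whole task reduces to proving $T(X)\ll(Xq^{-1/2}+X^{4/5}+X^{1/2}q^{1/2})\log^3 X$.

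Next I would apply Vaughan's identity with parameters $U=V=X^{2/5}$. This decomposes $T(X)$ into $O(1)$ sums of Type I and Type II. The Type I sums have the shape
\[
\sum_{m\le D}a_m\sum_{k\sim X/m}e(\alpha mk),\qquad D\le UV=X^{4/5},
\]
with coefficients $a_m\ll\log X$. The inner sum is bounded by $\min(X/m,\,1/\|\alpha m\|)$, and summing over $m$ with Lemma \ref{Lemma1} (applied with $Y=X/m$ and the variable $n=m$) produces a contribution of the order
$(Xq^{-1}+X^{4/5}+q)\log^2 X$, which is within the target.

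The Type II sums have the shape
\[
\sum_{U<m\le X/V}a_m\sum_{V<k\le X/m}b_k\,e(\alpha mk),
\]
with $|a_m|,|b_k|\ll\log X$. Here I would apply the Cauchy--Schwarz inequality in $m$, expand the square, swap the order of summation and invoke the standard bound $\sum_{m}e(\alpha m(k_1-k_2))\ll\min(X/m_{\max},\,\|\alpha(k_1-k_2)\|^{-1})$ for the resulting inner sum. The diagonal $k_1=k_2$ contributes $O(X\log X)$, while the off-diagonal terms, after summing over $h=k_1-k_2$ via Lemma \ref{Lemma1}, give something of the order
\[
X\bigl(q^{-1}+X^{-2/5}+q X^{-1}\bigr)^{1/2}\log^{3}X\ll\bigl(Xq^{-1/2}+X^{4/5}+X^{1/2}q^{1/2}\bigr)\log^3 X.
\]
Combining the Type I and Type II estimates and restoring the extra factor $\log X$ lost in the passage from $T(X)$ to $\mathcal{S}(X)$ via partial summation yields the claimed bound with the exponent $4$ of the logarithm.

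The routine but bookkeeping-heavy part is the Type II estimate: one must verify that $U=V=X^{2/5}$ really does balance all terms coming out of Cauchy--Schwarz, and that the auxiliary application of Lemma \ref{Lemma1} is legitimate (i.e.\ the coefficient $\alpha(k_1-k_2)$ is handled by a rational approximation with the same $q$). This is the only genuine obstacle; the Type I bound is elementary and the reduction from $\mathcal{S}$ to $T$ is standard.
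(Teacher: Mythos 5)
The paper does not prove this lemma: it simply cites Tolev's textbook (Lemma 23.2.3), which in turn presents the classical Vinogradov--Vaughan estimate. Your outline via Vaughan's identity with $U=V=X^{2/5}$, splitting into Type~I and Type~II bilinear sums and using Lemma~\ref{Lemma1} for the resulting sums of $\min(Y,\|\alpha n\|^{-1})$, is exactly the canonical route to this bound and is almost certainly what the cited source does. So your approach is correct and essentially matches the (unprinted) underlying argument.

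Two small corrections of detail. First, the reduction from $\mathcal{S}(X)$ to $T(X)=\sum_{n\le X}\Lambda(n)e(\alpha n)$ is not by partial summation and costs no logarithm: $\mathcal{S}(X)$ and $T(X)$ differ only by the contribution of prime powers $p^k$ with $k\ge 2$, which is $O(X^{1/2}\log X)$ and is already absorbed into the term $X^{1/2}q^{1/2}\log^4 X$. Because of this, your claim to obtain $\log^3 X$ for $T(X)$ and then ``lose a log'' in passing to $\mathcal{S}(X)$ does not quite line up; in fact the careful bookkeeping of the dyadic decomposition and of the coefficient sums $\sum_m|a_m|^2$ (where $a_m$ is of divisor-type size) already produces $\log^4 X$ for $T(X)$ itself, and $\mathcal{S}(X)$ then inherits the same power. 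Second, the worry you flag at the end is not an actual obstacle: after Cauchy--Schwarz and expansion, the off-diagonal sum over $h=k_1-k_2$ is $\sum_{0<|h|\le K}\min\bigl(M,\|\alpha h\|^{-1}\bigr)$, which is precisely the shape treated by Lemma~\ref{Lemma1} with $\beta=0$ and the same rational approximation $a/q$ to $\alpha$; no new approximation of $\alpha h$ or $\alpha(k_1-k_2)$ is required. Finally, in your Type~I estimate, Lemma~\ref{Lemma1} takes a fixed $Y$, so you should first split $m$ into dyadic ranges before applying it with $Y\asymp X/M$ on each range; this is routine but should be said.
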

\begin{proof}
See (\cite{Iwaniec-Kowalski}, Theorem 13.6).
\end{proof}

\begin{lemma}\label{Expansion}
For any $M\geq2$, we have
\begin{equation*}
\psi(t)=-\sum\limits_{1\leq|m|\leq M}\frac{e(mt)}{2\pi i m}
+\mathcal{O}\Bigg(\min\left(1, \frac{1}{M\|t\|}\right)\Bigg)\,,
\end{equation*}
\begin{proof}
See (\cite{Tolev1}, Lemma 5.2.2).
\end{proof}
\end{lemma}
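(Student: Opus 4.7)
The plan is to derive the decomposition from the classical pointwise Fourier expansion of the sawtooth function and then estimate the truncation tail carefully. For $t \notin \mathbb{Z}$, since $\psi(t) = \{t\} - 1/2$ is of bounded variation, Dirichlet's convergence theorem yields
\begin{equation*}
\psi(t) = -\sum_{m \neq 0} \frac{e(mt)}{2\pi i m} = -\frac{1}{\pi}\sum_{m=1}^{\infty} \frac{\sin(2\pi m t)}{m}.
\end{equation*}
Thus it suffices to show that the tail $R_M(t) = -(1/\pi)\sum_{m > M} \sin(2\pi m t)/m$ satisfies $R_M(t) \ll \min(1, 1/(M\|t\|))$.

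For the bound $R_M(t) \ll 1/(M\|t\|)$, I would apply Abel summation with the partial sums $S_m(t) = \sum_{k=1}^{m} \sin(2\pi k t)$. The elementary geometric-series estimate $|S_m(t)| \leq 1/|\sin(\pi t)| \leq 1/(2\|t\|)$ (using $|\sin(\pi t)| \geq 2\|t\|$) combines with the telescoping increments $1/m - 1/(m+1) = 1/(m(m+1))$ to give, after letting the upper limit tend to infinity,
\begin{equation*}
\Bigl|\sum_{m > M} \frac{\sin(2\pi m t)}{m}\Bigr| \ll \frac{1}{\|t\|}\sum_{m > M}\frac{1}{m(m+1)} \ll \frac{1}{M\|t\|}.
\end{equation*}

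For the uniform bound $R_M(t) \ll 1$, note that $\psi(t) = O(1)$ is trivial, so matters reduce to showing the truncated series $\sum_{1 \leq m \leq M} \sin(2\pi m t)/m$ is bounded by a constant uniformly in $t$ and $M$. This is a classical Sidon-type estimate, proved by splitting at $m_0 = \lceil 1/\|t\|\rceil$: the short range $m \leq m_0$ contributes $O(\|t\| \cdot m_0) = O(1)$ via $|\sin(2\pi m t)| \leq 2\pi m\|t\|$, while the long range $m > m_0$ is controlled by a second Abel summation with the same bound on $S_m$. The degenerate case $t \in \mathbb{Z}$, where the Fourier series converges to $0$ rather than to $\psi(t) = -1/2$, is absorbed into $\min(1, 1/(M\|t\|)) = 1$ at $\|t\| = 0$. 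The main technical point is tracking the absolute constants through both Abel summations so that a single implicit constant suffices for the $\min$ bound; once this is done, the two regimes combine to give the stated $O(\min(1, 1/(M\|t\|)))$.
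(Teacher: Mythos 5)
Your argument is correct. The paper itself offers no proof of this lemma --- it simply refers to Tolev's lecture notes (Lemma 5.2.2) --- so your write-up supplies the standard self-contained argument that such a reference contains: the pointwise Fourier expansion of $\psi$ at non-integral $t$ (valid since $\psi$ is of bounded variation and continuous there), Abel summation against the partial sums $S_m(t)$ with the bound $|S_m(t)|\le 1/|\sin\pi t|\le 1/(2\|t\|)$ to control the tail by $O(1/(M\|t\|))$, the split at $m_0\approx 1/\|t\|$ together with $|\sin 2\pi m t|\le 2\pi m\|t\|$ for the uniform $O(1)$ bound on the truncated sum, and the separate treatment of $t\in\mathbb{Z}$, where the symmetric truncated sum vanishes and the error term is allowed to be $O(1)$. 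Each of these ingredients is correct and they do combine to give the stated $\min\left(1,\tfrac{1}{M\|t\|}\right)$ bound, so I see no gap; the only thing to add for a fully polished version is an explicit constant bookkeeping in the two Abel summations, which you already flag as routine.
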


\begin{lemma}\label{Korput}
Suppose that $f''(t)$ exists, is continuous on $[a,b]$ and satisfies
\begin{equation*}
f''(t)\asymp\lambda\quad(\lambda>0)\quad\mbox{for}\quad t\in[a,b]\,.
\end{equation*}
Then
\begin{equation*}
\bigg|\sum_{a<n\le b}e(f(n))\bigg|\ll(b-a)\lambda^{1/2}+\lambda^{-1/2}\,.
\end{equation*}
\end{lemma}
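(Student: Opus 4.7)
The plan is to deduce the bound from the Kusmin--Landau first derivative test: if $g'$ is monotonic on $(a,b]$ with $\|g'(t)\|\geq\delta>0$ throughout, then $\sum_{a<n\leq b}e(g(n))\ll\delta^{-1}$. Since $f''\asymp\lambda$ has constant sign on $[a,b]$, the derivative $f'$ is strictly monotonic there, and its image is an interval of length $\asymp\lambda(b-a)$.

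First I would partition $(a,b]$ according to the nearest integer to $f'(t)$: for each integer $k$, set $I_k=\{t\in(a,b]:k-\tfrac12<f'(t)\leq k+\tfrac12\}$. These are pairwise disjoint subintervals whose union is $(a,b]$, and the number of nonempty ones is $O(1+\lambda(b-a))$. On each $I_k$ the identity $e(f(n))=e(f(n)-kn)$ allows replacing $f$ by $g(t)=f(t)-kt$, so that $g''=f''\asymp\lambda$ and $\|g'(t)\|=|f'(t)-k|$.

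The next step is to split off the stationary-phase region: write $I_k=I_k'\cup I_k''$, where $I_k'=\{t\in I_k:|g'(t)|\leq\lambda^{1/2}\}$. Since $|g''|\asymp\lambda$, the set $I_k'$ is an interval of length $O(\lambda^{-1/2})$, contributing $O(\lambda^{-1/2})$ trivially. On $I_k''$ (at most two subintervals on which $g'$ has constant sign), we have $g'$ monotonic and $\|g'\|\geq\lambda^{1/2}$, so Kusmin--Landau yields $O(\lambda^{-1/2})$. Summing over $k$ gives
\[
\Bigl|\sum_{a<n\leq b}e(f(n))\Bigr|\ll\bigl(1+\lambda(b-a)\bigr)\lambda^{-1/2}\ll(b-a)\lambda^{1/2}+\lambda^{-1/2},
\]
as required.

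The main obstacle in a fully self-contained write-up is the Kusmin--Landau inequality itself, whose proof proceeds by Abel summation together with the elementary geometric-series bound $\bigl|\sum_{n\leq N}e(\theta n)\bigr|\ll\|\theta\|^{-1}$; I would invoke it as a standard auxiliary lemma rather than reprove it. Everything else is routine, with only minor bookkeeping required for the endpoint intervals $I_k$ where $f'$ may fail to traverse the full range $(k-\tfrac12,k+\tfrac12]$.
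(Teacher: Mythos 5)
The paper itself gives no proof of this lemma; it simply cites Karatsuba (Ch.\ 1, Th.\ 5), so there is no argument in the paper to compare against. Your derivation via the Kusmin--Landau first-derivative test is the standard elementary proof of van der Corput's second-derivative test, and its structure is sound: partition $(a,b]$ by the nearest integer to $f'$, shift by $kn$ to replace $f'$ by $g'=f'-k$ with $\|g'\|=|g'|$, carve out the stationary region $|g'|\le\lambda^{1/2}$ where the trivial bound applies, invoke Kusmin--Landau on the remaining (at most two) monotone pieces, and sum over the $O(1+\lambda(b-a))$ nonempty classes. One bookkeeping point worth making explicit: the trivial estimate over $I_k'$, an interval of length $O(\lambda^{-1/2})$, is really $O(\lambda^{-1/2}+1)$ since an interval of length below $1$ can still contain a lattice point; summing over $k$ therefore also produces an extra $O(1+\lambda(b-a))$. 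When $\lambda\le1$ this is absorbed by $(b-a)\lambda^{1/2}+\lambda^{-1/2}$, and $\lambda\le1$ is the only regime in which the stated inequality is meaningful and the only one used in this paper, so the argument is correct in context; if you want the lemma exactly as stated for all $\lambda>0$, add the hypothesis $\lambda\ll1$ or note that otherwise the bound can fail for very short ranges. The alternative classical route, via truncated Poisson summation and the first-derivative estimate for exponential integrals, gives the same conclusion; your Kusmin--Landau approach is the more elementary of the two.
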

\begin{proof}
See (\cite{Karatsuba}, Ch.1, Th.5).
\end{proof}

\begin{lemma}\label{Iwaniec-Kowalski}
For any complex numbers $a(n)$ we have
\begin{equation*}
\bigg|\sum_{a<n\le b}a(n)\bigg|^2
\leq\bigg(1+\frac{b-a}{Q}\bigg)\sum_{|q|\leq Q}\bigg(1-\frac{|q|}{Q}\bigg)
\sum_{a<n,\, n+q\leq b}\overline{a(n+q)}a(n),
\end{equation*}
where $Q\geq1$.
\end{lemma}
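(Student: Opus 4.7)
My plan is to prove this smoothing inequality via the classical ``shift and Cauchy--Schwarz'' trick. Writing $S = \sum_{a < n \le b} a(n)$ and extending $a(n)$ by zero outside $(a, b]$, I observe that for every integer $j$ one trivially has $S = \sum_n a(n+j)$. Averaging this identity over $j = 0, 1, \dots, Q-1$ and switching the order of summation gives
\begin{equation*}
QS = \sum_n \sum_{j=0}^{Q-1} a(n+j),
\end{equation*}
where the outer sum is effectively supported on the $Q + (b-a)$ integers $n$ with $a - Q + 1 \le n \le b$, since $a(n+j)$ is nonzero only when $a < n + j \le b$ for some $j \in [0, Q-1]$.

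Next I would apply Cauchy--Schwarz to the outer sum over $n$ to obtain
\begin{equation*}
Q^2 |S|^2 \le \bigl(Q + b - a\bigr) \sum_n \bigg| \sum_{j=0}^{Q-1} a(n+j) \bigg|^2.
\end{equation*}
Expanding the square and parametrising pairs $(j_1, j_2) \in \{0, \dots, Q-1\}^2$ by their difference $q = j_1 - j_2$, which takes each value with $|q| < Q$ exactly $Q - |q|$ times, and then making the substitution $m = n + j_2$ in the inner sum over $n$, the right-hand side collapses to
\begin{equation*}
\bigl(Q + b - a\bigr) \sum_{|q| < Q} (Q - |q|) \sum_{a < m,\, m+q \le b} \overline{a(m+q)} \, a(m).
\end{equation*}

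Dividing through by $Q^2$ yields exactly the stated bound; extending the outer sum to $|q| \le Q$ is cost-free because the boundary term $|q| = Q$ carries weight $Q - |q| = 0$. There is no real obstacle: the only subtlety is the careful bookkeeping of the shifted supports, ensuring that the effective range of $n$ has length $Q + (b-a)$ and that the diagonal multiplicities in $(j_1, j_2)$ reproduce exactly the weights $(Q - |q|)$. Everything else is an elementary Cauchy--Schwarz manipulation of the type familiar from the proofs of Gallagher's lemma and of the large sieve.
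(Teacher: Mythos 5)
Your argument is correct. Note that the paper does not actually prove this lemma; it merely cites Iwaniec--Kowalski (Lemma 8.17). Your shift-and-Cauchy--Schwarz proof is the standard one (essentially the argument given in that reference and in the usual treatment of Weyl--van der Corput differencing): extend $a$ by zero, write $QS=\sum_n\sum_{j=0}^{Q-1}a(n+j)$, observe the outer sum runs over at most $(b-a)+Q$ integers, apply Cauchy--Schwarz, expand the square, and collect pairs $(j_1,j_2)$ by their difference $q=j_1-j_2$, each difference occurring $Q-|q|$ times; the extension of the range from $|q|<Q$ to $|q|\le Q$ is free since the endpoint weight vanishes, and the two orderings $\overline{a(n+q)}a(n)$ versus $a(n+q)\overline{a(n)}$ agree because the $q$ and $-q$ terms are conjugate and the full sum is real. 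The bookkeeping you flag is exactly the right thing to check, and it checks out.
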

\begin{proof}
See (\cite{Iwaniec-Kowalski}, Lemma 8.17).
\end{proof}

\section{Proof of the theorem}

\subsection{Outline of the proof}

\indent

Our method goes back to Vaughan \cite{Vaughan1}. We take a periodic with period 1 function such that
\begin{equation}\label{Ffunction}
F_\Delta(\theta)=\begin{cases}
0\quad \mbox{if} \quad -\frac{1}{2}\leq \theta< -\Delta\,,\\
1\quad\mbox{if}\quad-\Delta\leq \theta< \Delta\,,\\
0\quad\mbox{if}\quad\; \Delta\leq \theta<\frac{1}{2}\,.
\end{cases}
\end{equation}
On the basis of \eqref{Delta} and \eqref{Ffunction} we have that
non-trivial lower bound for the sum
\begin{equation*}
\sum\limits_{p\leq N\atop{p=[n^{1/\gamma}]}}F_\Delta(\alpha p+\beta)\log p
\end{equation*}
implies Theorem \ref{Maintheorem}.
For this purpose we define
\begin{equation}\label{Gamma}
\Gamma=\sum\limits_{p\leq N\atop{p=[n^{1/\gamma}]}}\big(F_\Delta(\alpha p+\beta)-2\Delta\big)\log p\,.
\end{equation}

\subsection{Upper bound for $\mathbf{\Gamma}$}
\indent

We shall prove the following fundamental lemma.
\begin{lemma}\label{Mainlemma}
For the sum $\Gamma$ defined by \eqref{Gamma} the upper bound
\begin{equation}\label{Gammaupperbound}
\Gamma\ll N^{\frac{14\gamma+11}{26}}\log^6N
\end{equation}
holds.
\end{lemma}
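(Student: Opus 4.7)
My plan is to invoke the Piatetski--Shapiro identity to split the PS-prime indicator, then attack the resulting exponential sums via a Fourier expansion of $F_\Delta-2\Delta$, Vaughan's identity, and Lemmas \ref{Lemma1}--\ref{Iwaniec-Kowalski}. Using $\lfloor x\rfloor=x-\psi(x)-1/2$ and $(p+1)^{\gamma}-p^{\gamma}=\gamma p^{\gamma-1}+\mathcal{O}(p^{\gamma-2})$ one has
\begin{equation*}
\mathbf{1}_{p=[n^{1/\gamma}]}=\lfloor(p+1)^{\gamma}\rfloor-\lfloor p^{\gamma}\rfloor=\gamma p^{\gamma-1}+\psi(p^{\gamma})-\psi((p+1)^{\gamma})+\mathcal{O}(p^{\gamma-2}),
\end{equation*}
so substituting into \eqref{Gamma} and absorbing the $\mathcal{O}(p^{\gamma-2})$ contribution gives $\Gamma=\Gamma_{1}+\Gamma_{2}+\mathcal{O}(\log N)$, where
\begin{align*}
\Gamma_{1}&=\gamma\sum_{p\le N}p^{\gamma-1}\bigl(F_\Delta(\alpha p+\beta)-2\Delta\bigr)\log p,\\
\Gamma_{2}&=\sum_{p\le N}\bigl(\psi(p^{\gamma})-\psi((p+1)^{\gamma})\bigr)\bigl(F_\Delta(\alpha p+\beta)-2\Delta\bigr)\log p.
\end{align*}

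For $\Gamma_{1}$ I would expand $F_\Delta-2\Delta$ in its Fourier series, whose coefficients satisfy $c_m\ll\min(\Delta,1/|m|)$, and truncate at $|m|\le H$ (the tail is swallowed by the $1/|m|$-decay). Partial summation reduces the inner sum $\sum_{p\le N}p^{\gamma-1}e(m\alpha p)\log p$ to a weighted integral of $\mathcal{S}(t,m\alpha)$, and Lemma \ref{Expsumest} applied with the rational approximation of $m\alpha$---whose denominator has order $q/\gcd(m,q)$ thanks to \eqref{alphaaq}---produces, after summing against $|c_m|$ and invoking \eqref{Nq}, \eqref{Delta}, \eqref{HNq}, a bound consistent with \eqref{Gammaupperbound}.

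The essential difficulty lies in $\Gamma_{2}$. Applying Lemma \ref{Expansion} with parameter $M$ from \eqref{Mgamma} to each $\psi$, together with the Fourier expansion of $F_\Delta-2\Delta$ truncated at $H$, reduces $\Gamma_{2}$ (up to error terms controllable by Lemma \ref{Lemma1} through the $\min(1,1/(M\|\cdot\|))$ remainder) to double exponential sums
\begin{equation*}
\sum_{1\le|k|\le M}\sum_{1\le|m|\le H}\frac{c_{m}e(m\beta)}{k}\,W(k,m),\qquad W(k,m)=\sum_{p\le N}e\bigl(kp^{\gamma}+m\alpha p\bigr)\log p,
\end{equation*}
together with an analogous sum involving $(p+1)^{\gamma}$. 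Converting $\sum_{p}\log p$ into $\sum_{n}\Lambda(n)+\mathcal{O}(N^{1/2}\log N)$ and applying Vaughan's identity with cut-off $v$ from \eqref{vgamma}, each $W(k,m)$ decomposes into Type~I sums $\sum_{d\le U}a_{d}\sum_{n\sim N/d}e(k(dn)^{\gamma}+m\alpha dn)$ and Type~II sums $\sum_{d\sim D}\sum_{n\sim N/d}a_{d}b_{n}\,e(k(dn)^{\gamma}+m\alpha dn)$. The Type~I inner sum, whose phase has second $n$-derivative of order $|k|d^{2}N^{\gamma-2}$, is bounded directly by Lemma \ref{Korput}; the Type~II sum is handled by Cauchy--Schwarz followed by Lemma \ref{Iwaniec-Kowalski} with a well-chosen shift length $Q$, reducing the problem to a smooth exponential sum again controlled by Lemma \ref{Korput}, with Lemma \ref{Lemma1} accounting for the $m\alpha$-factor after summation over $m$. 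The main obstacle is the precise calibration of $U$, $D$, $v$, $M$ and $H$ so that the two terms of Lemma \ref{Korput} balance in the Type~II estimate: the hypothesis $\gamma>11/12$ is precisely the threshold at which this balance yields a non-trivial bound, and the choices \eqref{Nq}--\eqref{vgamma} are tuned so that the total contribution from all $(k,m)$ aggregates with $\Gamma_{1}$ to produce \eqref{Gammaupperbound}.
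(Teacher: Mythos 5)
Your outline follows the paper's route closely: split the Piatetski--Shapiro indicator into a smooth term and a $\psi$-difference, Fourier-expand $F_\Delta-2\Delta$ and $\psi$, then attack the bilinear sums via Vaughan's identity, van der Corput, and Cauchy--Schwarz with the shift lemma. However, two of your technical claims are wrong and would derail an actual write-up.

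First, for $\Gamma_1$ you assert that the rational approximation of $m\alpha$ inherited from \eqref{alphaaq} has denominator of order $q/\gcd(m,q)$, and you plan to feed this into Lemma~\ref{Expsumest}. This does not work: reducing $ma/q$ to lowest terms gives denominator $q'=q/\gcd(m,q)$, but then $|m\alpha-ma/q|<m/q^2$, and $m/q^2\le 1/(q')^2$ only when $m\le\gcd(m,q)^2$ --- false for almost every $m\le H$ (in particular whenever $\gcd(m,q)=1$ and $m\ge 2$). So the hypothesis of Lemma~\ref{Expsumest} is not met. The paper instead applies Dirichlet's theorem to $\alpha h$ to get an approximation $a_h/q_h$ with $q_h\le q^2$ and then proves by contradiction (using \eqref{alphaaq}, $h\le H=[q^{1/2}]$, and a triangle inequality between $a/q$ and $a_h/(hq_h)$) that $q_h>q^{1/3}$. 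Both the upper and the lower bound on $q_h$ are used in \eqref{mathfrakSest2}, so this step cannot be skipped.

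Second, you say that in the Type~II estimate "Lemma~\ref{Lemma1} account[s] for the $m\alpha$-factor after summation over $m$." That is a misreading of the mechanism. After Cauchy--Schwarz and Lemma~\ref{Iwaniec-Kowalski}, the differenced phase $g_{l,q}(d)=f(d,l)-f(d,l+q)$ has the form $-\alpha h d q - m d^\gamma\bigl(l^\gamma-(l+q)^\gamma\bigr)$: the $\alpha$-linear term is linear in $d$ and therefore contributes nothing to $g''(d)$, which is what Lemma~\ref{Korput} uses (see \eqref{g''d}). So the irrationality of $\alpha$ plays no role in the bound on $\Theta(N_1,N_2)$; it enters only through the eventual factor $1/H$ in \eqref{Gamma2est2} (the sum over $h$ is treated trivially, uniformly in $h$), and Lemma~\ref{Lemma1} is used only to control the error term $\Sigma$ coming from the truncation of the expansion \eqref{fDeltaexpansion}. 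You should also note that the paper gains an extra factor $mN_1^{\gamma-1}$ by Abel summation on $e(-mn^\gamma)-e(-m(n+1)^\gamma)$ before estimating $\Theta$; expanding the two $\psi$'s into independent single-phase sums $W(k,m)$ as you propose still closes for $\gamma>11/12$, but you lose this cancellation and the tidy cancellation of the $1/m$ Fourier coefficient against the Abel factor, so the parameter bookkeeping would have to be redone from scratch.
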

\begin{proof}

From \eqref{Gamma} we write
\begin{equation}\label{Gammadecomp}
\Gamma=\sum\limits_{p\leq N}\big([-p^\gamma]-[-(p+1)^\gamma]\big)\big(F_\Delta(\alpha p+\beta)-2\Delta\big)\log p
=\Gamma_1+\Gamma_2\,,
\end{equation}
where
\begin{align}
\label{Gamma1}
&\Gamma_1=\sum\limits_{p\leq N}\big((p+1)^\gamma-p^\gamma\big)\big(F_\Delta(\alpha p+\beta)-2\Delta\big)\log p\,,\\
\label{Gamma2}
&\Gamma_2=\sum\limits_{p\leq N}\big(\psi(-(p+1)^\gamma)-\psi(-p^\gamma)\big)\big(F_\Delta(\alpha p+\beta)-2\Delta\big)\log p\,.
\end{align}

\newpage

\textbf{Upper bound for} $\mathbf{\Gamma_1}$
\indent

The function $F_\Delta(\theta)-2\Delta$ is well known to have the expansion
\begin{equation}\label{fDeltaexpansion}
\sum\limits_{1\leq|h|\leq H}\frac{\sin2\pi h\Delta}{\pi h}\,e(h\theta)+\mathcal{O}\Bigg(\min\left(1, \frac{1}{H\|\theta+\Delta\|}\right)+\min\left(1, \frac{1}{H\|\theta-\Delta\|}\right)\Bigg)\,.
\end{equation}
We also have
\begin{equation}\label{p+1}
(p+1)^\gamma-p^\gamma=\gamma p^{\gamma-1}+\mathcal{O}\left(p^{\gamma-2}\right)\,.
\end{equation}
Using \eqref{Gamma1}, \eqref{fDeltaexpansion} and \eqref{p+1} we obtain
\begin{equation}\label{Gamma1est1}
\Gamma_1=\gamma\sum\limits_{p\leq N}p^{\gamma-1}\log p
\sum\limits_{1\leq|h|\leq H}\frac{\sin2\pi h\Delta}{\pi h}\,e\big(h(\alpha p+\beta)\big)+\mathcal{O}\Big(\Sigma\log N\Big)\,,
\end{equation}
where
\begin{equation}\label{Sigma}
\Sigma=\sum\limits_{n=1}^N\Bigg(\min\left(1, \frac{1}{H\|\alpha n+\beta+\Delta\|}\right)
+\min\left(1, \frac{1}{H\|\alpha n+\beta-\Delta\|}\right)\Bigg)\,.
\end{equation}
By \eqref{alphaaq}, \eqref{Nq}, \eqref{HNq}, \eqref{Sigma} and Lemma \ref{Lemma1} we get
\begin{equation}\label{Sigmaest}
\Sigma\ll Nq^{-1} + \frac{N+q}{H} \log N \ll Nq^{-1/2}\log N
\ll N^{\frac{6\gamma+14}{26}}\log N\,.
\end{equation}
Now \eqref{Gamma1est1} and \eqref{Sigmaest} give us 
\begin{equation}\label{Gamma1est2}
\Gamma_1\ll\sum\limits_{h=1}^{H}\min\left(\Delta,\frac{1}{h}\right)\left|\sum\limits_{p\leq N}p^{\gamma-1}e(\alpha h p)\log p \right|+N^{\frac{6\gamma+14}{26}}\log^2N\,.
\end{equation}
Denote
\begin{equation}\label{mathfrakS}
\mathfrak{S}(u)=\sum\limits_{h\leq u}\left|\sum\limits_{p\leq N}p^{\gamma-1}e(\alpha h p)\log p \right|\,.
\end{equation}
Then
\begin{align}\label{Abelsum1}
\sum\limits_{h=1}^{H}\min\left(\Delta,\frac{1}{h}\right)
\left|\sum\limits_{p\leq N}p^{\gamma-1}e(\alpha h p)\log p \right|
&=\frac{\mathfrak{S}(H)}{H}+\int\limits_{\Delta^{-1}}^{H}\frac{\mathfrak{S}(u)}{u^2}\,du\nonumber\\
&\ll(\log N)\max_{\Delta^{-1}\leq u\leq H}\frac{\mathfrak{S}(u)}{u}\,.
\end{align}
On the other hand
\begin{equation}\label{Abelsum2}
\sum\limits_{p\leq N}p^{\gamma-1}e(\alpha h p)\log p=
N^{\gamma-1}S(N)+(1-\gamma)\int\limits_{2}^{N}S(y)y^{\gamma-2}\,dy\,,
\end{equation}
where
\begin{equation}\label{Sy}
S(y)=\sum\limits_{p\leq y}e(\alpha h p)\log p\,.
\end{equation}
From Dirichlet's approximation theorem it follows the existence of integers $a_h$ and $q_h$ such that
\begin{equation}\label{alphahahqh}
\bigg|\alpha h-\frac{a_h}{q_h}\bigg|\leq\frac{1}{q_hq^2}\,,\quad (a_h,q_h)=1\,,\quad 1\leq q_h\leq q^2\,.
\end{equation}
Bearing in mind \eqref{Sy}, \eqref{alphahahqh} and Lemma \ref{Expsumest} we find
\begin{equation}\label{Syest}
S(y)\ll \Big(yq_h^{-1/2}+y^{4/5}+y^{1/2}q_h^{1/2}\Big)\log^4y\,.
\end{equation}
Using \eqref{mathfrakS}, \eqref{Abelsum2} and \eqref{Syest} we deduce
\begin{equation}\label{mathfrakSest1}
\mathfrak{S}(u)\ll N^{\gamma-1}(\log N)^4\sum\limits_{h\leq u}\Big(Nq_h^{-1/2}+N^{4/5}+N^{1/2}q_h^{1/2}\Big)\,.
\end{equation}
Suppose  that
\begin{equation}\label{supposition}
q_h\leq q^{1/3}\,.
\end{equation}
By \eqref{HNq} and \eqref{supposition} we obtain
\begin{equation*}
hq_h\leq Hq^{1/3}\leq q^{5/6}<q\,.
\end{equation*}
From \eqref{alphaaq}, \eqref{alphahahqh} and the last inequality it follows that
\begin{equation}\label{different}
\frac{a}{q}\neq\frac{a_h}{hq_h}\,.
\end{equation}
On the one hand from \eqref{HNq}, \eqref{supposition}  and \eqref{different} we have
\begin{equation}\label{Ontheonehand}
\bigg|\frac{a}{q}-\frac{a_h}{hq_h}\bigg|=
\frac{|ahq_h-qa_h|}{hqq_h}\ge \frac{1}{hqq_h}\geq\frac{1}{Hqq^{1/3}}\geq\frac{1}{q^{11/6}}\,.
\end{equation}
On the other hand by \eqref{alphaaq} and \eqref{alphahahqh} we get
\begin{equation*}
\bigg|\frac{a}{q}-\frac{a_h}{hq_h}\bigg|\le \bigg|\alpha-\frac{a}{q}\bigg|
+\bigg|\alpha-\frac{a_h}{hq_h}\bigg|< \frac{1}{q^2}+\frac{1}{hq_hq^2}\leq\frac{1}{2q^2}\,,
\end{equation*}
which contradicts \eqref{Ontheonehand}.
This rejects the supposition \eqref{supposition}. Therefore
\begin{equation}\label{qhlimits}
q_h\in\big(q^{1/3},q^2\big]\,.
\end{equation}
Taking into account \eqref{Nq}, \eqref{mathfrakSest1} and \eqref{qhlimits} we find
\begin{equation}\label{mathfrakSest2}
\mathfrak{S}(u)\ll uN^{\gamma-1/2}q\log^4N\ll uN^{\frac{14\gamma+11}{26}}\log^4N\,.
\end{equation}
Summarizing \eqref{Gamma1est2}, \eqref{Abelsum1} and \eqref{mathfrakSest2} we deduce
\begin{equation}\label{Gamma1est3}
\Gamma_1\ll N^{\frac{14\gamma+11}{26}}\log^5N\,.
\end{equation}

\textbf{Upper bound for} $\mathbf{\Gamma_2}$
\indent

Using \eqref{Gamma2} and arguing as in $\Gamma_1$ we obtain
\begin{equation}\label{Gamma2est1}
\Gamma_2\ll\sum\limits_{h=1}^{H}\min\left(\Delta,\frac{1}{h}\right)
\left|\sum\limits_{p\leq N}\big(\psi(-(p+1)^\gamma)-\psi(-p^\gamma)\big)e(\alpha h p)\log p \right|
+N^{\frac{6\gamma+14}{26}}\log^2N\,.
\end{equation}
Denote
\begin{equation}\label{Omegau}
\Omega(u)=\sum\limits_{h\leq u}\left|\sum\limits_{p\leq N}
\big(\psi(-(p+1)^\gamma)-\psi(-p^\gamma)\big)e(\alpha h p)\log p \right|\,.
\end{equation}
Then
\begin{align}\label{Abelsum3}
\sum\limits_{h=1}^{H}\min\left(\Delta,\frac{1}{h}\right)
&\left|\sum\limits_{p\leq N}\big(\psi(-(p+1)^\gamma)-\psi(-p^\gamma)\big)e(\alpha h p)\log p \right|\nonumber\\
&=\frac{\Omega(H)}{H}+\int\limits_{\Delta^{-1}}^{H}\frac{\Omega(u)}{u^2}\,du\,.
\end{align}
The estimation \eqref{Gamma2est1} and formula \eqref{Abelsum3} imply
\begin{equation}\label{Gamma2est2}
\Gamma_2\ll(\log N)\max_{\Delta^{-1}\leq u\leq H}\frac{\Omega(u)}{u}+N^{\frac{6\gamma+14}{26}}\log^2N\,.
\end{equation}
We will estimate $\Omega(u)$. From  \eqref{Omegau} and Lemma \ref{Expansion}
with  $M$ defined by  \eqref{Mgamma} it follows
\begin{equation}\label{Omegauest1}
\Omega(u)\ll\big(\Omega_1(u)+u\Xi\big)\log^2N+uN^{1/2}\,,
\end{equation}
where
\begin{align}
\label{Omega1}
&\Omega_1(u)=\sum\limits_{h\leq u}\sum\limits_{m\sim M_1}\frac{1}{m}
\left|\sum\limits_{n\sim N_1}\Lambda(n)e(\alpha h n)
\Big(e\big(-mn^\gamma\big)-e\big(-m(n+1)^\gamma\big)\Big) \right|\,,\\
\label{Xi}
&\Xi=\sum\limits_{n\sim N_1}\min\left(1, \frac{1}{M\|n^\gamma\|}\right)\,,\\
\label{M1N1}
&M_1\leq M/2\,,\quad N_1\leq N/2\,.
\end{align}
Proceeding as in (\cite{Tolev1}, Th. 12.1.1) from \eqref{Xi} and \eqref{M1N1} we get
\begin{equation}\label{Xiest1}
\Xi\ll \Big(NM^{-1}+N^{\gamma/2}M^{1/2}+N^{1-\gamma/2}M^{-1/2}\Big)\log M\,.
\end{equation}
Bearing in mind \eqref{Mgamma} and \eqref{Xiest1}  we find
\begin{equation}\label{Xiest2}
\Xi\ll N^{\frac{14\gamma+11}{26}}\log N\,.
\end{equation}
Next we estimate $\Omega_1(u)$. Replacing
\begin{equation*}
\omega(t)=1-e\big(m(t^\gamma-(t+1)^\gamma)\big)
\end{equation*}
we deduce
\begin{align}\label{Abelsum4}
&\sum\limits_{n\sim N_1}\Lambda(n)e(\alpha h n)
\Big(e\big(-mn^\gamma\big)-e\big(-m(n+1)^\gamma\big)\Big)\nonumber\\
&=\omega(2N_1)\sum\limits_{n\sim N_1}\Lambda(n)e\big(\alpha h n-mn^\gamma\big)\nonumber\\
&-\int\limits_{N_1}^{2N_1}
\left(\sum\limits_{N_1<n\leq t}\Lambda(n)e\big(\alpha h n-mn^\gamma\big)\right)\omega'(t)\,dt\nonumber\\
&\ll m N^{\gamma-1}_1\max_{N_2\in[N_1,2N_1]}|\Theta(N_1,N_2)|\,,
\end{align}
where
\begin{equation}\label{Theta}
\Theta(N_1,N_2)=\sum\limits_{N_1<n\leq N_2}\Lambda(n)e\big(\alpha h n-mn^\gamma\big)\,.
\end{equation}
Now \eqref{Omega1} and \eqref{Abelsum4}  give us
\begin{equation}\label{Omega1est1}
\Omega_1(u)\ll N^{\gamma-1}_1\sum\limits_{h\leq u}\sum\limits_{m\sim M_1}
\max_{N_2\in[N_1,2N_1]}|\Theta(N_1,N_2)|\,.
\end{equation}
Let
\begin{equation}\label{N1less}
N_1\leq N^{\frac{14\gamma-2}{13\gamma}}\,.
\end{equation}
Taking into account \eqref{Mgamma}, \eqref{M1N1}, \eqref{Theta},
\eqref{Omega1est1}  and \eqref{N1less} we obtain
\begin{equation}\label{Omega1est2}
\Omega_1(u)\ll uN^{\frac{14\gamma+11}{26}}\,.
\end{equation}
Hence fort we assume that
\begin{equation}\label{N1greater}
N^{\frac{14\gamma-2}{13\gamma}}<N_1\leq 2N\,.
\end{equation}
We shall find the upper bound of the sum $\Theta(N_1,N_2)$.
Our argument is a modification of (Tolev \cite{Tolev1}, Th. 12.1.1) argument.

Denote
\begin{equation}\label{fdl}
f(d,l)=\alpha hdl-md^\gamma l^\gamma\,.
\end{equation}
Using \eqref{Theta}, \eqref{fdl} and Vaughan's identity (see \cite{Vaughan2}) we get
\begin{equation}\label{Thetadecomp}
\Theta(N_1,N_2)=U_1-U_2-U_3-U_4,
\end{equation}
where
\begin{align}
\label{U1}
&U_1=\sum_{d\le v}\mu(d)\sum_{N_1/d<l\le N_2/d}(\log l)e(f(d,l)),\\
\label{U2}
&U_2=\sum_{d\le v}c(d)\sum_{N_1/d<l\le N_2/d}e(f(d,l)),\\
\label{U3}
&U_3=\sum_{v<d\le v^2}c(d)\sum_{N_1/d<l\le N_2/d}e(f(d,l)),\\
\label{U4}
&U_4= \mathop{\sum\sum}_{\substack{N_1<dl\le N_2 \\d>v,\,l>v}}a(d)\Lambda(l) e(f(d,l))
\end{align}
and where
\begin{equation}\label{cdad}
|c(d)|\leq\log d,\quad  | a(d)|\leq\tau(d)
\end{equation}
and $v$ is defined by \eqref{vgamma}. Consider first $U_2$ defined by \eqref{U2}.
Bearing in mind  \eqref{fdl} we find
\begin{equation}\label{f''ll}
|f^{\prime\prime}_{ll}(d,l)|\asymp m d^2N_1^{\gamma-2}\,.
\end{equation}
Now \eqref{f''ll} and Lemma \ref{Korput} give us
\begin{equation}\label{sumefdl}
\sum\limits_{N_1/d<l\le N_2/d}e(f(d, l))\ll m^{1/2}N_1^{\gamma/2}+m^{-1/2}d^{-1}N_1^{1-\gamma/2}\,.
\end{equation}
From  \eqref{Mgamma}, \eqref{vgamma}, \eqref{U2}, \eqref{cdad} and \eqref{sumefdl} it follows
\begin{equation}\label{U2est}
U_2\ll\big(N_1^{\gamma/2}m^{1/2}v+m^{-1/2}N_1^{1-\gamma/2}\big)\log^2N\ll N_1^{\gamma/2}m^{1/2}v\log^2N\,.
\end{equation}
In order to estimate $U_1$ defined by \eqref{U1} we apply Abel's summation formula.
Then arguing as in the estimation of $U_2$  we obtain
\begin{equation}\label{U1est}
U_1\ll N_1^{\gamma/2}m^{1/2}v\log^2N\,.
\end{equation}
Next we consider $U_3$ and $U_4$  defined by \eqref{U3} and \eqref{U4}.
We have
\begin{equation}\label{U3U'3}
U_3\ll|U'_3|\log N\,,
\end{equation}
where
\begin{equation}\label{U'3}
U'_3=\sum_{D<d\le 2D}c(d)\sum_{L<l\le 2L\atop{N_1<dl\le N_2}}e(f(d,l))
\end{equation}
and where
\begin{equation}\label{ParU'3}
N_1/4\leq DL\le 2N_1\,, \quad  v/2\leq D\leq v^2\,.
\end{equation}
Also
\begin{equation}\label{U4U'4}
U_4\ll|U'_4|\log N\,,
\end{equation}
where
\begin{equation}\label{U'4}
U'_4=\sum_{D<d\le 2D}a(d)\sum_{L<l\le 2L\atop{N_1<dl\le N_2}}\Lambda(l)e(f(d,l))
\end{equation}
and where
\begin{equation}\label{ParU'4}
N_1/4\leq DL\le 2N_1\,, \quad  v/2\leq D\leq  2N_1/v\,.
\end{equation}
By  \eqref{vgamma}, \eqref{N1greater}, \eqref{ParU'3} and \eqref{ParU'4} it follows that
the conditions for the sum $U'_4$ are more restrictive than the conditions for the sum $U'_3$.
Bearing in mind this consideration and the coefficients of $U'_3$ and $U'_4$ we make a conclusion
that it's enough to estimate the sum $U'_4$ with the conditions
\begin{equation}\label{ParU'4new}
N_1/4\leq DL\le 2N_1\,, \quad  N^\frac{1}{2}_1/2\leq D\leq v^2\,.
\end{equation}
From \eqref{cdad}, \eqref{U'4}, \eqref{ParU'4new} and Cauchy's inequality we deduce
\begin{align}\label{U'42est1}
|U'_4|^2&\ll \sum_{D<d\le 2D}\tau^2(d)\sum_{D<d\le 2D}\bigg|\sum_{L_1<l\le L_2}\Lambda(l)e(f(d,l))\bigg|^2\nonumber\\
&\ll D(\log N)^3\sum_{D<d\le 2D}\bigg|\sum_{L_1<l\le L_2}\Lambda(l)e(f(d,l))\bigg|^2,
\end{align}
where
\begin{equation}\label{maxmin1}
L_1=\max{\bigg\{L,\frac{N_1}{d}\bigg\}},\quad L_2=\min{\bigg\{2L, \frac{N_2}{d}\bigg\}}\,.
\end{equation}
Using \eqref{ParU'4new} -- \eqref{maxmin1}  and Lemma \ref{Iwaniec-Kowalski} with $Q\leq L/2$ we find
\begin{align}\label{U'42est2}
|U'_4|^2&\ll D(\log N)^3   \sum_{D<d\le 2D}\frac{L}{Q}
\sum_{|q|\leq Q}\bigg(1-\frac{|q|}{Q}\bigg)
\sum_{L_1<l\le L_2\atop{L_1<l+q\le L_2}}\Lambda(l+q)\Lambda(l)e\big(f(d,l)-f(d,l+q)\big)\nonumber\\
&\ll \Bigg(\frac{LD}{Q}\sum_{0<|q|\leq Q}
\sum_{L<l\le 2L\atop{L<l+q\le 2L}}\Lambda(l+q)\Lambda(l)
\bigg|\sum_{D_1<d\le D_2}e\big(g_{l,q}(d)\big)\bigg|\nonumber\\
&\quad\quad\quad\quad\quad\quad\quad\quad\quad
\quad\quad\quad\quad\quad\quad\quad\quad\quad+\frac{(LD)^2}{Q}\log N\Bigg)\log^3N\,,
\end{align}
where
\begin{equation}\label{maxmin2}
D_1=\max{\bigg\{D,\frac{N_1}{l},\frac{N_1}{l+q}\bigg\}},\quad
D_2=\min{\bigg\{2D,\frac{N_2}{l},\frac{N_2}{l+q}\bigg\}}
\end{equation}
and
\begin{equation}\label{gd}
g(d)=g_{l,q}(d)=f(d,l)-f(d,l+q)\,.
\end{equation}
It is easy to see that the sum over negative $q$ in formula \eqref{U'42est2}
is equal to the sum over positive $q$. Therefore
\begin{align}\label{U'42est3}
|U'_4|^2\ll\Bigg(\frac{LD}{Q}\sum_{1\leq q\leq Q}
\sum_{L<l\le 2L}\Lambda(l+q)\Lambda(l)
\bigg|&\sum_{D_1<d\le D_2}e(g_{l,q}(d))\bigg|\nonumber\\
&\quad\quad+\frac{ (LD)^2}{Q}\log N\Bigg)\log^3N\,.
\end{align}
Consider the function $g(d)$.
From   \eqref{fdl} and \eqref{gd} we obtain
\begin{equation}\label{g''d}
|g''(d)|\asymp m D^{\gamma-2} |q| L^{\gamma-1}\,.
\end{equation}
Taking into account  \eqref{maxmin2}, \eqref{g''d} and Lemma \ref{Korput} we get
\begin{equation}\label{sumegd}
\sum\limits_{D_1<d\leq D_2}e(g(d))
\ll m^{1/2}q^{1/2}D^{\gamma/2}L^{\gamma/2-1/2}+m^{-1/2}q^{-1/2}D^{1-\gamma/2}L^{1/2-\gamma/2}\,.
\end{equation}
We choose
\begin{equation}\label{Q0mDL}
Q=\min\big(L/4\,, Q_0\big)\,,
\end{equation}
where
\begin{equation*}
Q_0=m^{-1/3}D^{2/3-\gamma/3}L^{1/3-\gamma/3}\,.
\end{equation*}
Here \eqref{Mgamma}, \eqref{M1N1}, \eqref{N1greater}, \eqref{ParU'4new} and  the direct verification assure us that
\begin{equation*}
Q_0>N^{\frac{115}{858}}\,.
\end{equation*}
By \eqref{U'42est3}, \eqref{sumegd}  and \eqref{Q0mDL} we deduce
\begin{align}\label{U'42est4}
|U'_4|^2&\ll \big( D^2L^2Q^{-1}+m^{1/2}Q^{1/2}D^{1+\gamma/2}L^{3/2+\gamma/2}+m^{-1/2}Q^{-1/2}D^{2-\gamma/2}L^{5/2-\gamma/2}\big)\log^4N\nonumber\\
&\ll \Big(D^2L^2L^{-1}+ D^2L^2Q_0^{-1}+m^{1/2}Q_0^{1/2}D^{1+\gamma/2}L^{3/2+\gamma/2}\nonumber\\
&\hspace{48mm}+m^{-1/2}D^{2-\gamma/2}L^{5/2-\gamma/2}\big(L^{-1/2}+Q_0^{-1/2}\big)\Big)\log^4N\nonumber\\
&\ll \big( D^2L+m^{1/3}D^{4/3+\gamma/3}L^{5/3+\gamma/3}+m^{-1/2}D^{2-\gamma/2}L^{2-\gamma/2}\nonumber\\
&\hspace{60mm}+m^{-1/3}D^{5/3-\gamma/3}L^{7/3-\gamma/3}\big)\log^4N\,.
\end{align}
From \eqref{ParU'4new} and \eqref{U'42est4} it follows
\begin{equation}\label{U'4est}
|U'_4|\ll \big( N_1^{1/2}v+M^{1/6}N_1^{3/4+\gamma/6}\big)\log^2N\,.
\end{equation}
Now  \eqref{U4U'4} and \eqref{U'4est} imply
\begin{equation}\label{U4est}
U_4\ll \big( N_1^{1/2}v+M^{1/6}N_1^{3/4+\gamma/6}\big)\log^3N\,.
\end{equation}
Arguing as in the estimation of $U'_4$ for the sum \eqref{U'3} we find
\begin{equation}\label{U'3est}
|U'_3|\ll \big( N_1^{1/2}v+M^{1/6}N_1^{3/4+\gamma/6}\big)\log^2N\,.
\end{equation}
The estimates \eqref{U3U'3} and \eqref{U'3est} give us
\begin{equation}\label{U3est}
U_3\ll \big( N_1^{1/2}v+M^{1/6}N_1^{3/4+\gamma/6}\big)\log^3N\,.
\end{equation}
Summarizing \eqref{Thetadecomp}, \eqref{U2est}, \eqref{U1est},
\eqref{U4est} and \eqref{U3est} we get
\begin{equation}\label{Thetaest}
\Theta(N_1,N_2)\ll \Big( N_1^{1/2}v+M^{1/6}N_1^{3/4+\gamma/6}+N_1^{\gamma/2}m^{1/2}v\Big)\log^3N\,.
\end{equation}
By \eqref{Mgamma}, \eqref{vgamma}, \eqref{Omega1est1}, \eqref{N1greater} and \eqref{Thetaest}  it follows
\begin{equation}\label{Omega1est3}
\Omega_1(u)\ll uN^{\frac{14\gamma+11}{26}}\log^3N\,.
\end{equation}
From \eqref{Gamma2est2}, \eqref{Omegauest1},
\eqref{Xiest2}, \eqref{Omega1est2} and \eqref{Omega1est3} we deduce
\begin{equation}\label{Gamma2est3}
\Gamma_2\ll N^{\frac{14\gamma+11}{26}}\log^6N\,.
\end{equation}
Using \eqref{Gammadecomp}, \eqref{Gamma1est3} and \eqref{Gamma2est3}
we establish the upper bound \eqref{Gammaupperbound}.

The lemma is proved.
\end{proof}

\subsection{The end of the proof}
\indent

Bearing in mind  \eqref{Louerbound}, \eqref{Delta},
\eqref{Gamma} and Lemma \ref{Mainlemma} we obtain
\begin{equation*}
\sum\limits_{p\leq N\atop{p=[n^{1/\gamma}]}}F_\Delta(\alpha p+\beta)\log p\gg N^{\frac{14\gamma+11}{26}}\log^6N\,.
\end{equation*}

This completes the proof of theorem.

\vskip20pt
\footnotesize
\begin{flushleft}
S. I. Dimitrov\\
\quad\\
Faculty of Applied Mathematics and Informatics\\
Technical University of Sofia \\
Blvd. St.Kliment Ohridski 8 \\
Sofia 1756, Bulgaria\\
e-mail: sdimitrov@tu-sofia.bg\\
\end{flushleft}

\end{document}